\numberwithin{equation}{section}
\newtheorem{theorem}{Theorem}[section]
\newtheorem*{theorem*}{Theorem}
\newtheorem{lemma}[theorem]{Lemma}
\newtheorem{proposition}[theorem]{Proposition}
\newtheorem{corollary}[theorem]{Corollary}
\theoremstyle{definition}{

\newtheorem{definition}[theorem]{Definition}
\newtheorem*{definition*}{Definition}

\newtheorem*{question*}{Question}
\newtheorem*{example*}{Example}
\newtheorem*{examples*}{Examples}
\newtheorem{remark}[theorem]{Remark}
\newtheorem*{remark*}{Remark}

}
\DeclareMathOperator{\xor}{\triangle}
\newcommand{\abbr}[1]{{\sc{\lowercase{#1}}}}
\def\R{{\mathbb R}}
\def\P{{\mathbb P}}
\def\N{{\mathbb N}}
\newcommand{\cB}{{\mathcal B}}
\newcommand{\cG}{{\mathcal G}}
\newcommand{\sss}{{\mathbb S}}
\newcommand{\cW}{{\mathcal W}}
\newcommand{\cX}{{\mathcal X}}
\newcommand{\Bin}{\operatorname{Bin}}
\renewcommand{\epsilon}{\varepsilon}
\author{Amir Dembo}
\address{Amir Dembo\hfill\break
Department of Mathematics\\ Stanford University\\ Sloan Hall \\
Stanford, CA 94305, USA.}
\email{amir@math.stanford.edu}
\author{Eyal Lubetzky}
\address{Eyal Lubetzky\hfill\break
Courant Institute 
\\ New York University\\
251 Mercer Street\\ New York, NY 10012, USA.}
\email{eyal@courant.nyu.edu}
\title[Large deviations in uniform random graphs]
{A large deviation principle for the \\  Erd\H{o}s--R\'enyi uniform random graph}
\begin{document}

\begin{abstract}\vspace{-0.2cm}
Starting with the large deviation principle (\abbr{LDP})
for the Erd\H{o}s--R\'enyi binomial random graph $\cG(n,p)$ 
(edge indicators are i.i.d.), due to 
Chatterjee and Varadhan (2011), we derive the \abbr{LDP} for the uniform random graph $\cG(n,m)$ (the uniform distribution over graphs with $n$ vertices and $m$ edges), at suitable $m=m_n$. Applying the latter \abbr{ldp} we find that 
tail decays for subgraph counts in $\cG(n,m_n)$ are controlled by variational 
problems, which up to a constant shift, coincide with those
studied by Kenyon et al.\ and Radin et al.\ in the context of 
constrained random graphs, e.g., the edge/triangle model. 
\end{abstract}

\subjclass[2010]{05C80, 60F10}
\keywords{Large deviations, 
Erd\H{o}s--R\'enyi graphs, constrained random graphs}

{\mbox{}\vspace{-2.2cm}
\maketitle
}
\vspace{-0.8cm}

\section{Introduction}\label{secIntro}
The Erd\H{o}s--R\'enyi \emph{binomial} random graph model $\cG(n,p)$ is the graph on $n$ vertices where each edge is present independently with probability $p$; the \emph{uniform} random graph $\cG(n,m)$ is the uniform distribution over graphs with $n$ vertices and exactly $m$ edges.

Let $\cW$ be the space of all bounded measurable functions
$f:[0,1]^2 \to \R$ that are symmetric ($f(x,y) = f(y,x)$ for all $x,y \in
[0,1]$). Let $\cW_0\subset \cW$ denote all \emph{graphons}, that is, symmetric measurable functions
$[0,1]^2 \to [0,1]$ (these generalize finite graphs; see~\eqref{eq:W_G-def}).  The cut-norm of $W\in\cW$ is given by  
\begin{align*}
\left\|W\right\|_\square &:= \sup_{S, T \subset [0,1]} \bigg|\int_{S \times T}
  W(x,y)  \ dxdy\bigg| = \sup_{u,v \colon [0,1] \to [0,1]} \bigg|\int_{[0,1]^2} W(x,y)u(x)v(y)
  \ dxdy\bigg|\,,
\end{align*}
(by linearity of the integral it suffices to consider $\{0,1\}$-valued $u,v$, hence the equality).
For any measure-preserving map $\sigma \colon [0,1] \to [0,1]$ and $W
\in \cW$, let $W^\sigma \in \cW$ denote the graphon 
$W^\sigma(x,y) = W(\sigma (x), \sigma
(y))$. The cut-distance on $\cW$ is then defined as
\[
\delta_\square (W_1,W_2) := \inf_{\sigma} \left\|W_1 - W_2^\sigma\right\|_\square
\,,
\]
with the infimum taken over all measure-preserving bijections $\sigma$ 
on $[0,1]$. It yields the pseudo-metric space $(\cW_0,
\delta_\square)$, which is elevated into a genuine metric
space $(\widetilde \cW_0,\delta_\square)$
upon taking the quotient w.r.t.\ the equivalence relation $W_1\sim W_2$ iff $\delta_\square(W_1,W_2) = 0$.
In what may be viewed as a topological version of Szemer\'edi's regularity
lemma, Lov\'asz and Szegedy~\cite{LS06} showed that the metric space
 $(\widetilde \cW_0, \delta_\square)$ is compact.
 For a finite simple graph $H=(V(H),E(H))$ with $V(H) = \{1,\ldots,k\}$, its subgraph density in $W\in\cW_0$ is 
\[
t_H(W) := \int_{[0,1]^k} \prod_{(i,j) \in E(H)} W(x_i, x_j) \
dx_1 \cdots dx_k\,,
\]
with the map $W\mapsto t_H(W)$ being Lipschitz-continuous in $(\widetilde\cW_0,\delta_\square)$ 
  (see~\cite[Thm~3.7]{BCLSV08}).  

Define $I_p \colon [0,1] \to \R$ by
\begin{equation}
  \label{eq:Ip}
I_p(x) := \frac{x}2  \log \frac{x}{p} + \frac{1-x}2 \log \frac{1-x}{1-p}\quad\mbox{ for $p \in (0,1)$ and $x \in [0,1]$}\,,
\end{equation}
and extend $I_p$ to $\cW_0$ via $I_p(W) := \int_{[0,1]^2} I_p(W(x,y)) \ dxdy$ for $W \in \cW_0$. As $I_p$ is
convex on $[0,1]$, it is lower-semicontinuous on
$\widetilde \cW_0$ w.r.t.\ the cut-metric topology (\cite[Lem.~2.1]{CV11}).

In the context of the space of graphons $\widetilde\cW_0$, a simple graph $G$ with vertices $\{1, \ldots, n\}$ can be
represented by
\begin{equation}\label{eq:W_G-def}
W_G(x,y) = \begin{cases}
                 1 & \text{if $(\lceil nx\rceil, \lceil ny\rceil )$ is an edge of $G$}, \\
                 0 & \text{otherwise.}
               \end{cases}
\end{equation}
For two graphs $G$ and $H$ let $\hom(H, G)$ count the number of
homomorphisms from $H$ to $G$ (i.e., maps $V(H) \to V(G)$ that carry
edges to edges). Let
\[
t_H(G) :=|V(G)|^{-|V(H)|}|\hom(H,G)| = t_H(W_G)\,.
\]
A sequence of graphs $\{G_n\}_{n \geq 1}$ is said to converge if
the sequence of subgraph densities $t_H(G_n)$ converges for every fixed finite
simple graph $H$. It was shown in~\cite{LS06} that for any
such convergent graph sequence there is a limit object $W \in \widetilde \cW_0$ such
that $t_H(G_n) \to t_H(W)$ for every fixed $H$. Conversely, any
$W \in \widetilde \cW_0$ arises as a limit of a convergent graph sequence.
It was shown in \cite{BCLSV08} that a sequence of graphs
$\{G_n\}_{n \geq 1}$ converges if
and only if the sequence of graphons $W_{G_n} \in \cW_0$ converges in
$\cW_0$ w.r.t.\ $\delta_\square$

A random graph $G_n \sim \cG(n,p)$ corresponds to
a random point $W_{G_n} \in  \widetilde \cW_0$---inducing a
probability distribution $\P(G_n\in\cdot)$ on $\widetilde \cW_0$ supported on a
finite set of points ($n$-vertex graphs)---and $G_n\to W$ for the constant graphon $W\equiv p$ a.s.\ for every fixed $0<p<1$.
Chatterjee and Varadhan~\cite{CV11} showed that, for $0<p<1$ fixed, the random graph $\cG(n,p)$ obeys a large deviation principle (\abbr{LDP}) in $(\widetilde\cW_0,\delta_\square)$ with the rate function $I_p(\cdot)$. 
Further denote $\|W\|_1 = \int |W(x,y)|\, dxdy$, and considering the restricted spaces
\[  \cW_0^{(p)} := \big\{ W\in \cW_0 \,:\; \|W\|_1 = p\big\}\quad\mbox{ and }\quad\widetilde\cW_0^{(p)}=\big\{ W \in \widetilde \cW_0 \,:\; \|W\|_1 = p \big\} \,,
\]
here we deduce the analogous statement for the random graph $\cG(n,m)$, the uniform distribution over all graphs with $n$ vertices and exactly $m$ edges, with a rate function $J_p(\cdot)$ restricted to~$\widetilde\cW_0^{(p)}$. As we later conclude, 
the variational formulas of this \abbr{LDP} for 
$\cG(n,m)$, addressing such random graph structure conditioned on a large deviation, coincide with those studied earlier by 
Kenyon et al.\ and Radin et al.\ (cf.,~\cite{KRRS17a,KRRS17b,KRRS18}).

\begin{theorem}\label{thm:ldp}
Fix $0<p<1$ and let $m_n \in \N$ be such that $m_n/\binom{n}2\to p$ as $n\to\infty$.  Let $G_n \sim \cG(n,m_n)$. Then the sequence $\P(G_n\in\cdot)$  obeys 
the \abbr{LDP} in the space $(\widetilde \cW_0, \delta_\square)$ with 
the good rate function $J_p$, where $J_p(W)= I_p(W)$ if $W\in\widetilde\cW_0^{(p)}$ and is $\infty$ otherwise. That is, for any closed set $F \subseteq \widetilde \cW_0$,
\[
\limsup_{n \to \infty} n^{-2} \log \P(G_n\in
F) \leq - \inf_{W \in F} J_p(W)\,,
\]
and for any open $U \subseteq \widetilde \cW_0$,
\[
\liminf_{n \to \infty} n^{-2} \log \P(G_n\in
U) \geq - \inf_{W \in U} J_p(W)\,.
\]
\end{theorem}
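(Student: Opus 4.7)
The plan is to realize $\cG(n,m_n)$ as $\cG(n,p)$ conditioned on $E_n := \{|E(G_n)| = m_n\}$ and push the Chatterjee--Varadhan \abbr{LDP} for $\cG(n,p)$ through this conditioning. Writing $N := \binom{n}{2}$, the assumption $m_n/N \to p$ together with Stirling's formula and the quadratic Taylor expansion of the Bernoulli relative entropy at $p$ yields $\log \P(E_n) = -o(n^2)$, so the conditioning is negligible at the $n^2$ scale. I will use throughout that $W \mapsto \|W\|_1$ is $1$-Lipschitz on $(\widetilde \cW_0, \delta_\square)$ (taking $S = T = [0,1]$ in the cut-norm gives $|\|W_1\|_1 - \|W_2\|_1| \le \delta_\square(W_1,W_2)$), so $A_\delta := \{W : |\|W\|_1 - p| \le \delta\}$ is closed for every $\delta > 0$ and $\widetilde \cW_0^{(p)} = \bigcap_{\delta > 0} A_\delta$.

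\textit{Upper bound.} For any closed $F \subset \widetilde \cW_0$ and each fixed $\delta > 0$, the event $E_n$ forces $\|W_{G_n}\|_1 = 2m_n/n^2 \to p$, so for $n$ large $\{W_{G_n}\in F\}\cap E_n \subset \{W_{G_n} \in F\cap A_\delta\}$. Combining the identity $\P_{\cG(n,m_n)}(\cdot) = \P_{\cG(n,p)}(\cdot \mid E_n)$ with the Chatterjee--Varadhan upper bound on the closed set $F \cap A_\delta$ yields
\[
\limsup_{n\to\infty} n^{-2}\log \P_{\cG(n,m_n)}(F) \le -\inf_{W \in F \cap A_\delta} I_p(W).
\]
Compactness of $(\widetilde \cW_0, \delta_\square)$ plus lower semi-continuity of $I_p$ imply that as $\delta\downarrow 0$ these infima increase to $\inf_{W \in F \cap \widetilde \cW_0^{(p)}} I_p(W) = \inf_F J_p$ (extract a convergent subsequence of near-minimizers), completing the upper bound.

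\textit{Lower bound.} Fix $W_0 \in U$ with $\|W_0\|_1 = p$ and $I_p(W_0) < \infty$ (approximating $\inf_U J_p$), and pick $\eta > 0$ with $B(W_0,3\eta)\subset U$. The Chatterjee--Varadhan lower bound gives $\P_{\cG(n,p)}(B(W_0,\eta)) \ge \exp(-n^2(I_p(W_0) + o(1)))$. Decomposing by edge count,
\[
\P_{\cG(n,p)}(B(W_0,\eta)) = \sum_m \P(\Bin(N,p) = m)\,\P_{\cG(n,m)}(B(W_0,\eta)) \le \max_m \P_{\cG(n,m)}(B(W_0,\eta)),
\]
so some $m^*$ satisfies the same exponential lower bound, and the Lipschitz property forces $|m^* - Np|, |m_n - Np| \le N\eta$ for $n$ large (hence $|m^* - m_n| \le 2N\eta$). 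The crucial combinatorial step is the edge-swap monotonicity
\[
\P_{\cG(n,m+1)}(B(W_0, r + 2/n^2)) \ge \P_{\cG(n,m)}(B(W_0, r)) \quad (r > 0),
\]
obtained by double-counting pairs $(G,e)$ with $G \in B(W_0,r)$, $|E(G)| = m$, $e \notin E(G)$: each such pair produces $G + e \in B(W_0, r + 2/n^2)$ with $m+1$ edges (a single edge flip changes $W_G$ by a symmetric indicator of $L^1$-mass $2/n^2$, hence cut-distance by at most $2/n^2$), giving $(N-m) S_m^r \le (m+1) S_{m+1}^{r+2/n^2}$ where $S_m^r := \#\{G \in B(W_0,r) : |E(G)| = m\}$; dividing by $\binom{N}{m+1} = \tfrac{N-m}{m+1}\binom{N}{m}$ yields the stated probability inequality. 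Iterating $|m_n - m^*|$ times inflates the radius by at most $4N\eta/n^2 \le 2\eta$ for $n$ large, so
\[
\P_{\cG(n,m_n)}(B(W_0, 3\eta)) \ge \P_{\cG(n,m^*)}(B(W_0, \eta)) \ge \exp(-n^2(I_p(W_0) + o(1))).
\]
Taking $\liminf$ and then letting $\eta\downarrow 0$ completes the lower bound, since $B(W_0,3\eta)\subset U$.

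\textit{Main obstacle.} The nontrivial ingredient is the edge-swap monotonicity together with the bookkeeping of radius inflation when transferring from $\cG(n,m^*)$ to $\cG(n,m_n)$. The calibration is delicate but forgiving: a single edge flip moves $W_G$ by $O(1/n^2)$ in cut-distance while the admissible $m$-range is $O(n^2\eta)$ wide, so cumulative inflation is $O(\eta)$ and safely absorbed in the target ball. Everything else -- the conditioning reduction, the bound $\log \P(E_n) = -o(n^2)$, and the limit $\delta\downarrow 0$ via compactness of $\widetilde \cW_0$ -- is routine.
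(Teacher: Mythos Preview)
Your proof is correct and is a genuinely different route from the paper's, mainly in the lower bound. The paper packages the whole argument into a general conditional-\abbr{LDP} framework (its Proposition~2.1): it introduces the intermediate laws $\nu_n^\eta = \mu_n(\cdot \mid \cB^o_{f,p,\eta})$, proves an abstract result that an exponentially good approximation of $\nu_n$ by $\nu_n^\eta$ transfers the \abbr{LDP}, and then constructs a coupling of $G_n \sim \cG(n,m_n)$ with $G_n^\eta \sim (\cG(n,p)\mid |2E_n/n^2-p|<\eta)$ by adding or deleting at most $\eta n^2$ edges. To convert the bound $|E(G_n)\xor E(G_n^\eta)|<\eta n^2$ into a cut-distance bound, the paper invokes the inverse counting theorem of Borgs--Chayes--Lov\'asz--S\'os--Vesztergombi (closeness of all $t_H$ implies closeness in $\delta_\square$).

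Your argument bypasses both the abstract proposition and the inverse counting lemma. For the upper bound you carry out directly what the paper's Proposition~2.1(a) does in the abstract. For the lower bound, instead of coupling to a conditioned $\cG(n,p)$, you pick a single $m^*$ that maximizes $\P_{\cG(n,m)}(B(W_0,\eta))$ and walk from $m^*$ to $m_n$ by the clean edge-swap monotonicity $\P_{\cG(n,m\pm 1)}(B(W_0,r+2/n^2))\ge \P_{\cG(n,m)}(B(W_0,r))$; here you only need the elementary inequality $\delta_\square(W_G,W_{G'})\le \|W_G-W_{G'}\|_1 = 2|E(G)\xor E(G')|/n^2$, which is strictly weaker than the inverse counting lemma the paper uses. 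Two small points worth making explicit in a final write-up: the monotonicity in the edge-\emph{deleting} direction (needed when $m^*>m_n$) follows by the symmetric double-count, and the bookkeeping $|m^*-m_n|\le 2N\eta$ requires absorbing the $O(n)$ discrepancy between $Np$ and $pn^2/2$, which is harmless but should be mentioned. What the paper's approach buys is a reusable abstract proposition for conditional \abbr{LDP}s; what yours buys is a shorter, more elementary, self-contained proof of this particular theorem.
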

Define
\begin{equation}
\label{eq:variational1}
\phi_H (p,r) := \inf\left\{I_p(W) \colon W \in \widetilde \cW_0~,~ t_H(W) \geq r\right\}
\end{equation}
and further let
\begin{equation}
\label{eq:variational2}
\psi_H (p,r) := \inf\left\{I_p(W) \colon W \in \widetilde \cW_0^{(p)} ~,~ t_H(W) \geq r\right\}
\end{equation}
(with $I_p$ having compact level sets in $(\widetilde \cW_0,\delta_\square)$ 
and $t_H(\cdot)$ continuous on $(\widetilde\cW_0,\delta_\square)$,
the infimums in~\eqref{eq:variational1},\eqref{eq:variational2}
are attained whenever the relevant set of graphons is nonempty).
For any $r \ge t_H(p)$ we relate the equivalent form of~\eqref{eq:variational2}
(see Corollary \ref{cor:variational}), given by
\begin{equation}\label{eq:variational-eq-r}
\psi_H(p,r)=\inf\left\{I_p(W) \colon W \in \widetilde \cW_0^{(p)} ~,~ t_H(W) = r\right\}\,,
\end{equation}
to the following 
variational problem that has been extensively studied (e.g.,~\cite{RS13,KRRS17a,KRRS17b,KRRS18}) in constrained random graphs such as the edge/triangle model (where $H$ is a triangle):
\begin{equation}
\label{eq:variational2'}
F_H (p,r) := \sup\left\{h_e(W) \colon W \in \widetilde \cW_0^{(p)}~,~ t_H(W) = r\right\}\,,
\end{equation}
where $h_e(x) = -\frac12(x\log x + (1-x)\log(1-x))$ is the (natural base) entropy function.
As $I_{p}(x)=-h_e(x)-\frac{x}2\log p -\frac{1-x}2\log(1-p)$ and
$\|W\|_1=p$ throughout $\widetilde\cW_0^{(p)}$,
we see that both 
variational problems for $F_H$ and $-\psi_{H}$ 
have the same set of optimizers, and
\[ F_H(p,r) = -\psi_H(p,r) + h_e(p)\,.\]

As a main application of their \abbr{LDP}, Chatterjee and Varadhan~\cite{CV11} showed that the large deviation rate function for subgraph counts in $\cG(n,p)$ for any fixed $0<p<1$ and graph $H$ reduces to the variational problem~\eqref{eq:variational1}. Namely, if $G_n\sim\cG(n,p)$ then
\[
\lim_{n \to \infty} n^{-2} \log \P\left(t_H(G_n)
  \geq r\right) = - \phi_H(p,r) \quad\mbox{ for every fixed $p,r\in(0,1)$ and $H$}\,,
\]
and, on the event $\{t_H(G_n)\geq r\}$, the graph $G_n$ is typically close to a minimizer of~\eqref{eq:variational1}.
Theorem~\ref{thm:ldp} implies the analogous statement for the random graph $\cG(n,m_n)$ w.r.t.\ the variational problem~\eqref{eq:variational2} (similar statements hold for lower tails of subgraph counts both in case of 
$\cG(n,p)$ and that of $\cG(n,m_n)$).

\begin{corollary}
  \label{cor:variational}
Fixing a subgraph $H$ and $0<p<1$, let
$r_H \in (t_H(p),1]$ denote the largest $r$ for which the 
collection of graphons in \eqref{eq:variational2} is nonempty.
\begin{enumerate}[(a)]
\item The \abbr{lsc} 
function $r \mapsto \psi_H(p,r)$ is 
zero on $[0,t_H(p)]$ 
and 
finite, 
strictly increasing 
on $[t_H(p),r_H]$. The nonempty set $F_\star$ of minimizers of \eqref{eq:variational2} is a single point $W_\star \equiv p$ for 
$r\leq t_H(p)$ and $F_\star$ coincides
for any $r \in [t_H(p),r_H]$ with the 
 minimizers of~\eqref{eq:variational-eq-r}.
\item For any $m_n \in \N$ such that $m_n/\binom{n}2\to p$ as $n\to\infty$ and
any right-continuity point $r \in [0,r_H)$ of
$t \mapsto \psi_H(p,t)$, the random graph $G_n\sim \cG(n,m_n)$ satisfies
\begin{align}\label{eq:asymptotic_rate_func}
\lim_{n \to \infty} n^{-2} \log \P\left(t_H(G_n)
  \geq r\right) &= - \psi_H(p,r) \,.
\end{align}
\item For any $(p,r)$ as in part (b),
and every $\epsilon >
0$ there is $C = C(H, \epsilon, p, r)>0$ so
that for all $n$ large enough
  \begin{equation}\label{eq:optimizers}
  \P\left(\delta_\square (G_n, F_\star) \ge \epsilon \;\big|\; t_H(G_n)
    \geq r\right) \leq e^{-C 
    n^2}\,.
  \end{equation}
\end{enumerate}
\end{corollary}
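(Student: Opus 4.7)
\emph{Proof proposal.}
The plan is to derive all three parts from Theorem~\ref{thm:ldp} via the standard extraction of rates, with the single nontrivial input being a convexity perturbation around the constant graphon $W \equiv p$.

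For part~(a), the constant $W \equiv p$ lies in $\widetilde\cW_0^{(p)}$ with $I_p(W) = 0$ and $t_H(W) = p^{|E(H)|} = t_H(p)$, so $\psi_H(p,\cdot) \equiv 0$ on $[0, t_H(p)]$ with unique minimizer $W \equiv p$ (since $I_p(x) = 0$ forces $x = p$). On $[t_H(p), r_H]$, existence of minimizers follows from compactness of the level sets of $I_p$ combined with continuity of $t_H$, and $\psi_H < \infty$ from boundedness of $I_p$ on $[0,1]$. For strict monotonicity, given $t_H(p) \le r_1 < r_2 \le r_H$ I would take a minimizer $W_2$ of $\psi_H(p, r_2)$ and form $W_\lambda := \lambda W_2 + (1-\lambda)\, p \in \widetilde\cW_0^{(p)}$: convexity of $I_p$ gives $I_p(W_\lambda) \leq \lambda\,\psi_H(p, r_2)$, while continuity of $\lambda \mapsto t_H(W_\lambda)$, running from $t_H(p)$ at $\lambda=0$ to $t_H(W_2)\ge r_2$ at $\lambda=1$, produces by intermediate value some $\lambda^* < 1$ with $t_H(W_{\lambda^*}) \ge r_1$; hence $\psi_H(p, r_1) \le \lambda^*\psi_H(p, r_2) < \psi_H(p, r_2)$ (one separately verifies $\psi_H(p, r_2)>0$ via $I_p(W)=0 \Rightarrow W\equiv p$). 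This strict monotonicity in turn forces $t_H(W_\star) = r$ at every minimizer $W_\star$ of~\eqref{eq:variational2}, since $t_H(W_\star) > r$ would yield $\psi_H(p, t_H(W_\star)) \le I_p(W_\star) = \psi_H(p, r)$; this identifies $F_\star$ with the set of minimizers of~\eqref{eq:variational-eq-r}.

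For part~(b), I apply the \abbr{ldp} of Theorem~\ref{thm:ldp} to the closed set $\{W : t_H(W) \geq r\}$ for the upper bound $\limsup n^{-2} \log \P(t_H(G_n) \geq r) \leq -\psi_H(p, r)$, and to the open set $\{W : t_H(W) > r\}$ together with $\P(t_H(G_n) \geq r) \geq \P(t_H(G_n) > r)$ for the lower bound $\liminf n^{-2} \log \P(t_H(G_n) \ge r) \ge -\lim_{s\downarrow r}\psi_H(p, s)$. The two bounds match at right-continuity points of $\psi_H(p,\cdot)$, giving~\eqref{eq:asymptotic_rate_func}.

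For part~(c), the set $K := \{W : t_H(W)\ge r,\ \delta_\square(W, F_\star) \ge \epsilon\}$ is closed (by continuity of $t_H$ and of $\delta_\square(\cdot, F_\star)$), and compactness of the level sets of $I_p$ produces a minimizer of $J_p$ on $K$, which cannot lie in $F_\star$; hence $c := \inf_K J_p > \psi_H(p, r)$. Combining the \abbr{ldp} upper bound on $\P(G_n \in K)$ with the lower bound from part~(b) yields
\[
\P\bigl(\delta_\square(G_n, F_\star) \ge \epsilon \,\big|\, t_H(G_n) \ge r\bigr) \le e^{-C n^2}
\]
for all large $n$ with $C := (c - \psi_H(p, r))/2 > 0$. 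The main obstacle in the argument is the strict-monotonicity step of part~(a); parts~(b) and~(c) then follow as standard consequences of the~\abbr{ldp}.
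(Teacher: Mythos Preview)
Your argument is essentially the same as the paper's: the convex interpolation $W_\lambda=\lambda W_2+(1-\lambda)p$ for strict monotonicity (and hence for $t_H(W_\star)=r$), the closed/open pair $\{t_H\ge r\}$ and $\{t_H>r\}$ for part~(b), and the strict inequality $\inf_K J_p>\psi_H(p,r)$ for part~(c). One point you have not addressed is the lower semicontinuity of $r\mapsto\psi_H(p,r)$ claimed in part~(a); the paper handles this by a short compactness argument (take near-minimizers $W_{r'}$ with $r'\uparrow r$, pass to a limit point along a level set of $J_p$, and use continuity of $t_H$ together with the \textsc{lsc} of $J_p$), and you should include this step.
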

\begin{remark}\label{rem:no-limit}
Since the function $r\mapsto\psi_H(p,r)$ is monotone, it is continuous a.e.; however, the identity~\eqref{eq:asymptotic_rate_func} may fail 
when $\psi_H(p,\cdot)$ is discontinuous at $r$. For example, 
at $r=r_H$ the \abbr{lhs} of \eqref{eq:asymptotic_rate_func} 
equals $-\infty$ whenever $m_n/\binom{n}{2} \uparrow p$ slowly enough.
\end{remark}
\begin{remark}\label{rem:sparse}
The analog of~\eqref{eq:asymptotic_rate_func} in the sparse regime (with edge density $p_n =o(1)$) has been established in~\cite{CD16} in terms of a discrete variational problem in lieu of~\eqref{eq:variational1}, valid when $n^{-c_H}\ll p_n \ll 1$ for some $c_H>0$ (see also~\cite{Eldan16}, improving the range of~$p_n$, and~\cite{LZ-dense,LZ-sparse,BGLZ17,Zhao17} for analyses of these variational problems in the sparse/dense regimes).
In contrast with the delicate regime  $p_n=n^{-c}$, such results in the range $p_n\gg (\log n)^{-c}$ of $\cG(n,p)$ are a  straightforward consequence of the weak regularity lemma (cf.~\cite[\S5]{LZ-sparse}), and further 
extend to $\cG(n,m_n)$, where the discrete variational problem features an extra constraint on the number of edges (see
Proposition \ref{prop:sparse-var}).
\end{remark}

Consider $(p,r)$ in the setting of Corollary~\ref{cor:variational}. 
The studies of the variational problem for $F_H$ given in~\eqref{eq:variational2'} were motivated by the question of estimating the number of graphs with prescribed edge and $H$-densities, via the following relation:
\[
F_{H}(p,r) = \lim_{\delta\downarrow 0}\lim_{n\to\infty} \frac1{n^2}\log |\mathscr{H}_{n,p,r}^\delta|\mbox{ where }\mathscr{H}_{n,p,r}^\delta =  \left\{ G_n \,:\,  \begin{array}{l}\left| |E(G_n)|/\tbinom{n}{2}-p\right| \leq \delta\,,\\ \noalign{\medskip}|t_{H}(G_n) - r|\leq \delta\end{array}\right\}\,.\]
(This follows by general principles from the \abbr{LDP} of~\cite{CV11} 
for $\cG(n,p)$; see Proposition \ref{prop:exp-g}(a), or 
\cite[Thm~3.1]{RS13} for the derivation in the special case of the edge/triangle~model).
Corollary~\ref{cor:variational} allows us, roughly speaking, to interchange the order of these two limits; for instance, for any 
right-continuity point $r \ge t_H(p)$ of $t\mapsto \psi_H(p,t)$  
(which holds a.e.),
the same variational problem in~\eqref{eq:variational2'} also satisfies
\begin{equation}\label{eq:var2'-no-delta} F_{H}(p,r) = \lim_{n\to\infty} \frac{1}{n^2} \log |\mathscr{H}_{n,m_n,r}|\mbox{ where }
\mathscr{H}_{n,m,r} =  \left\{ G_n \,:\,  \begin{array}{l} |E(G_n)|=m\,,\\ \noalign{\medskip}t_{H}(G_n) \geq r\end{array}\right\}\,.
\end{equation}
(Indeed, $-\psi_{H}(p,r) = \lim_{n\to\infty} n^{-2}\log\P\left(t_{H}(\cG(n,m_n))\geq r\right)$, and  this log-probability is then translated to $\log|\mathscr{H}_{n,m_n,r}|$ by adding $n^{-2}\log \binom{\binom{n}2}{m_n} \to h_e(p) = F_{H}(p,r)+\psi_{H}(p,r)$.)
For the various results (as well as numerical simulations for the many problems related to~\eqref{eq:variational2'} that remain open), the reader is referred to~\cite{KRRS17a,KRRS17b,KRRS18} and the references therein. 

\medskip
Recall that the law of $\cG(n,m_n)$ can be represented as that of a random 
graph $G_n$ from the model $\cG(n,p)$, conditional on $|E(G_n)|=m_n$. 
While our choice of $m_n$ in Theorem~\ref{thm:ldp} is rather typical for 
$\cG(n,p)$ when $n \gg 1$, any \abbr{LDP} and in particular
the \abbr{LDP} of \cite{CV11}, deals only with open and closed sets. The 
challenge in deriving Theorem~\ref{thm:ldp} is thus in handling the 
point conditioning. 
To this end, we provide in Section~\ref{sec:cond_ldp}  
a general result (Proposition~\ref{prop:exp-g}) for deriving a conditional \abbr{LDP}, which we then combine in \S\ref{subsec:coupling} 
with a combinatorial coupling, and thereby prove 
Theorem~\ref{thm:ldp}. Building on the latter, 
\S\ref{subsec:thm2-proof} provides the proof
of Corollary~\ref{cor:variational}, whereas \S\ref{subsec:sparse} is 
devoted to the analog of \eqref{eq:asymptotic_rate_func} for $\cG(n,m_n)$
in the range $m_n \gg n^2 (\log n)^{-c_H}$
(see Proposition~\ref{prop:sparse-var}).

\section{Conditional LDP}\label{sec:cond_ldp}
The \abbr{LDP} for $\cG(n,m)$ is obtained by the next 
result, whose proof mimics that of~\cite[Theorem~4.2.16]{DZ} about 
exponential approximations (see~\cite[Definition~4.2.14]{DZ}). 
\begin{proposition}\label{prop:exp-g}
Suppose 
Borel probability measures $\{\mu_n\}$ on a metric space $(\cX,d)$ 
satisfy the \abbr{LDP} with rate $a_n \to 0$ and 
good rate function $I(\cdot)$. Fix a metric space~$(\sss,\rho)$,
a continuous map $f:(\cX,d) \to (\sss,\rho)$ and $s \in \sss$. For 
every $\eta>0$, let  
$Z_n^\eta$ denote radnom variables of the law 
\begin{equation}\label{eq:nu-def}
\nu_n^\eta := \mu_n\left( \cdot \mid \cB_{f,s,\eta}^o\right)\,,
\end{equation}
where
\begin{align*}
\cB_{f,s,\eta}&:=\left\{ x\in \cX : \rho(s,f(x))\leq\eta \right\}\,,& \cB^o_{f,s,\eta}&:=\left\{ x\in \cX : \rho(s,f(x))< \eta\right\}\,.
\end{align*}
\begin{enumerate}[(a)]\item If
\begin{equation}\label{eq-p-likely}
\lim_{n\to\infty} a_n \log \mu_n(\cB^o_{f,s,\eta}) = 0\qquad\mbox{for every $\eta>0$ fixed}\,,
\end{equation}
then for the good rate function 
\begin{align*}
J_0(x) &:= \begin{cases}
 I(x), & f(x)=s\, \\
 \infty, & \mbox{otherwise\,}	
 \end{cases}
\end{align*}
and any open $U\subset\cX$ and closed $F \subset \cX$, 
\begin{align}
\liminf_{\eta \to 0} \liminf_{n\to\infty} a_n \log \nu_n^\eta(U) &\geq 
-\inf_{x\in U} J_0(x)\,,\label{eq:lbd-zero}
\\	
\label{eq:ubd-zero}
\limsup_{\eta \to 0}
\limsup_{n\to\infty} a_n \log\nu_n^\eta(F) &\leq -\inf_{x\in F} J_0(x)\,.
\end{align}

\item Suppose~\eqref{eq-p-likely} holds and that
$\{Z_n^\eta\}$ form an exponentially good approximation of 
variables $Z_n \sim \nu_n$; i.e., for any $\delta>0$, there exist  
couplings $\P_{n,\eta}$ of $(Z_n,Z_n^\eta)$ so that 
\begin{equation}\label{eq:exp-g-apx}
\lim_{\eta \downarrow 0} \limsup_{n \to \infty}
a_n \log \P_{n,\eta} (d(Z_n,Z_n^\eta) > \delta) = - \infty \,.
\end{equation}
Then $\{\nu_n\}$ 
satisfy the \abbr{LDP} with rate $a_n \to 0$ and the good rate function 
$J_0(\cdot)$.
\end{enumerate}
\end{proposition}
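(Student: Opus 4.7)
The plan is to mimic the standard proof of~\cite[Theorem~4.2.16]{DZ} for exponentially good approximations, but now lifted to the \emph{conditional} laws $\nu_n^\eta$. Throughout, the key identity
\[
\nu_n^\eta(A) = \frac{\mu_n(A \cap \cB^o_{f,s,\eta})}{\mu_n(\cB^o_{f,s,\eta})}
\]
allows one to convert statements about $\nu_n^\eta$ into two separate contributions: a numerator to which the \abbr{LDP} for $\mu_n$ applies, and a denominator which, by hypothesis~\eqref{eq-p-likely}, vanishes at the exponential scale and so does not affect any of the eventual bounds.

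For part (a) I would handle the lower bound~\eqref{eq:lbd-zero} by fixing an open $U$ and any $x_0 \in U$ with $f(x_0)=s$ and $I(x_0)<\infty$; then $U \cap \cB^o_{f,s,\eta}$ is open and contains $x_0$ (since $\rho(s,f(x_0))=0<\eta$), so the \abbr{LDP} lower bound for $\mu_n$ gives $\liminf_n a_n \log \mu_n(U \cap \cB^o_{f,s,\eta}) \geq -I(x_0) = -J_0(x_0)$, independently of $\eta>0$; taking infimum over such $x_0$ yields~\eqref{eq:lbd-zero}. For the upper bound~\eqref{eq:ubd-zero}, I would use $\nu_n^\eta(F) \leq \mu_n(F\cap \cB_{f,s,\eta})/\mu_n(\cB^o_{f,s,\eta})$ with the \abbr{LDP} upper bound applied to the closed set $F\cap \cB_{f,s,\eta}$ (closed by continuity of $f$); this reduces the problem to showing
\[
\liminf_{\eta \downarrow 0}\; \inf_{x \in F\cap \cB_{f,s,\eta}} I(x) \;\geq\; \inf_{x \in F\cap f^{-1}(s)} I(x) \;=\; \inf_{x\in F} J_0(x).
\]
This is the one place the goodness of $I$ is genuinely used: picking $x_\eta \in F\cap \cB_{f,s,\eta}$ whose $I(x_\eta)$ approximates the monotone-in-$\eta$ limit $c$ of the left-hand side, compactness of the level set $\{I\leq c+1\}$ extracts a subsequential limit $x_\star$; continuity of $f$ together with $\rho(s,f(x_\eta))\leq \eta$ forces $f(x_\star)=s$, closedness of $F$ gives $x_\star \in F$, and lower semicontinuity of $I$ gives $I(x_\star) \leq c$. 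In particular, $J_0$ inherits goodness from $I$, since its level sets coincide with $f^{-1}(s) \cap \{I\leq c\}$.

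For part (b), the exponentially good approximation~\eqref{eq:exp-g-apx} provides the usual sandwich. For the upper bound on a closed $F$ with $\delta$-enlargement $F^\delta$,
\[
\nu_n(F) \;\leq\; \nu_n^\eta(F^\delta) \,+\, \P_{n,\eta}\bigl(d(Z_n,Z_n^\eta)>\delta\bigr),
\]
so $a_n \log \nu_n(F)$ is at most the maximum of the two corresponding $a_n$-log terms (plus a vanishing $a_n\log 2$). Sending $n\to\infty$ and then $\eta \downarrow 0$, the first term is bounded by $-\inf_{F^\delta} J_0$ via part (a) applied to the closed set $F^\delta$, while the second diverges to $-\infty$ by~\eqref{eq:exp-g-apx}; letting $\delta \downarrow 0$ and using goodness of $J_0$ plus closedness of $F$ upgrades $\inf_{F^\delta} J_0 \uparrow \inf_F J_0$. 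For the lower bound on an open $U$, given $x_0 \in U$ with $J_0(x_0)<\infty$, I would choose $\delta>0$ with $B(x_0,2\delta)\subset U$ and use
\[
\nu_n(U) \;\geq\; \nu_n^\eta\bigl(B(x_0,\delta)\bigr) - \P_{n,\eta}\bigl(d(Z_n,Z_n^\eta)>\delta\bigr);
\]
by~\eqref{eq:exp-g-apx} the subtracted term is exponentially smaller than the first (for $\eta$ small enough, relative to the bound $-J_0(x_0)$ furnished by part (a) applied to the open ball $B(x_0,\delta)$), so $\liminf_n a_n \log \nu_n(U) \geq -J_0(x_0)$, and optimizing over $x_0 \in U$ completes the argument.

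The main (and really only nontrivial) obstacle is the subsequential compactness argument in part (a)'s upper bound: it is where goodness of $I$ is invoked, and it is what ensures that~\eqref{eq:ubd-zero} holds with the \emph{intrinsic} conditional rate function $J_0$ rather than a possibly smaller relaxation. Everything else in part (b) is the textbook exponentially good approximation calculus, provided one has first confirmed via part (a) that $J_0$ itself is a good rate function so that the final $\delta \downarrow 0$ passage is legitimate.
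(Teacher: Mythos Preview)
Your proposal is correct and follows essentially the same route as the paper: both first establish $\eta$-level bounds for $\nu_n^\eta$ by writing $\nu_n^\eta(A)$ as a ratio of $\mu_n$-probabilities, then use the goodness of $I$ via a subsequential compactness argument to pass from the closed balls $\cB_{f,s,\eta}$ to the fiber $f^{-1}(s)$ in the upper bound, and finally transfer to $\nu_n$ via the standard exponentially good approximation sandwich of~\cite[Theorem~4.2.16]{DZ}. The only cosmetic difference is that the paper packages the intermediate bounds through auxiliary rate functions $J_\eta$ and $J^o_\eta$ (and cites specific display numbers in~\cite{DZ}), whereas you write out the sandwich explicitly; the substance is identical.
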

\begin{proof}
We first deduce from \eqref{eq-p-likely} that for  
every $\eta>0$, open $U \subset \cX$ and 
closed $F \subset \cX$,
\begin{align}
 \liminf_{n\to\infty} a_n \log \nu_n^\eta(U) &\geq -\inf_{x\in U} J^o_\eta(x)\,,\label{eq:lbd-delta} 
\\
\label{eq:ubd-delta}
\limsup_{n\to\infty} a_n \log\nu_n^\eta(F) &\leq -\inf_{x\in F} J_\eta(x)\,,
\end{align}
where
\[
J_\eta(x) := \begin{cases}
 I(x), & x\in \cB_{f,s,\eta}\, \\
 \infty, & \mbox{otherwise}	
 \end{cases}
\,,\qquad J^o_\eta(x) := \begin{cases} I(x), & x\in \cB^o_{f,s,\eta}\, \\
\infty, & \mbox{otherwise.}	
\end{cases}
\]
Indeed, for any Borel set $A$ and $\eta>0$, 
\[ 
\mu_n(A \cap \cB^o_{f,s,\eta}) \le 
\nu_n^\eta(A) 
\leq
\frac{\mu_n(A\cap \cB_{f,s,\eta})}{\mu_n(\cB^o_{f,s,\eta})}\,.\]
Hence, for any open set $U$, we deduce from the \abbr{LDP} 
for $\{\mu_n\}$ that
\[ 
\liminf_{n\to\infty} a_n \log\nu_n^\eta(U) \geq 
\liminf_{n\to\infty} a_n \log \mu_n(U\cap \cB^o_{f,s,\eta}) \geq 
-\inf_{x\in U\cap \cB^o_{f,s,\eta}} I(x) = -\inf_{x\in U}J^o_\eta(x)\,.
\]
Similarly, for any closed set $F$ it follows from \eqref{eq-p-likely} that
\begin{align*}
 \limsup_{n\to\infty} a_n \log\nu_n^\eta(F) &\leq \limsup_{n\to\infty}
a_n \log \mu_n(F\cap \cB_{f,s,\eta}) \leq -\inf_{x\in F\cap \cB_{f,s,\eta}} I(x) = -\inf_{x\in F} J_\eta(x)\,.
\end{align*}
(a). In the lower bound \eqref{eq:lbd-delta} one obviously can 
use $J_0(\cdot) \ge J_\eta^o(\cdot)$, 
yielding \eqref{eq:lbd-zero}. Moreover, we get the 
bound 
\eqref{eq:ubd-zero} out of \eqref{eq:ubd-delta}, upon showing 
that for any closed $F \subseteq \cX$, 
\begin{equation}\label{eq:dz-4218}
\inf_{y \in F} \{J_0(y)\} \le 
\liminf_{\eta\downarrow 0} \inf_{y \in F} \{J_\eta(y)\} := \alpha \,.
\end{equation}
To this end, it suffices to consider only 
$\alpha<\infty$, in which case 
$J_{\eta_\ell}(y_\ell) \le \alpha + \ell^{-1}$
for some $\eta_\ell \downarrow 0$ and $y_\ell \in F$.
As $\{y_\ell\}$ is contained in the 
compact level set $\{ x : I(x) \le \alpha+1\}$, it has a limit
point $y_\star \in F$. Since $J_{\eta_\ell}(y_\ell) = I(y_\ell) \to \alpha$
it follows from the \abbr{lsc} of $x \mapsto I(x)$ 
that $I(y_\star) \le \alpha$. Passing to the convergent sub-sequence
$\rho(f(y_\ell),f(y_\star)) \to 0$. Further, recall that 
$\rho(s,f(y_\ell)) \le \eta_\ell \downarrow 0$, hence by the triangle inequality
$\rho(s,f(y_\star))=0$. Consequently, $J_0(y_\star) = I(y_\star) \le \alpha$ yielding \eqref{eq:dz-4218} and completing the proof of 
part (a).

\smallskip
\noindent
(b). Clearly, $J_\eta$ is a good rate function (namely, of compact
level sets $\{x : J_\eta(x) \leq \alpha\} = \{x: I(x) \leq \alpha\} 
\cap \cB_{f,s,\eta}$), and $J_\eta \le J^o_\eta \uparrow J_0$. If 
$J^o_\eta \equiv J_\eta$ then \eqref{eq:ubd-delta}--\eqref{eq:lbd-delta}
form the \abbr{LDP} for $\{\nu_n^\eta\}$ with the good rate function $J_\eta$.
While in general this may not be the case, assuming hereafter that 
\eqref{eq:exp-g-apx} holds and proceeding as in \cite[(4.2.20)]{DZ}, 
we get from \eqref{eq:lbd-delta} that $\{\nu_n\}$ 
satisfies the \abbr{LDP} lower bound with the rate function
\[
\underline{J}(y) := \sup_{\delta > 0} \liminf_{\eta \downarrow 0} 
\inf_{z \in B_{y,\delta}} \{ J^o_\eta(z) \} 
\,,\]
where $B_{y,\delta}=\{z \in\cX : d(y,z)<\delta\}$ (see \cite[(4.2.17)]{DZ}, noting that no \abbr{ldp} upper bound for $\nu_n^\eta$ is needed here). 
Since $y \in B_{y,\delta}$ for any $\delta>0$, we have that 
\[
J_0(y) = \lim_{\eta \downarrow 0} J^o_\eta(y) \ge \underline{J}(y) 
\]
and consequently $\{\nu_n\}$ trivially satisfies the \abbr{LDP} lower 
bound also with respect to the good rate function $J_0$. Now, 
precisely as in the proof of \cite[Theorem 4.2.16(b)]{DZ}, we get from 
\eqref{eq:exp-g-apx} and \eqref{eq:ubd-delta} that the corresponding  
\abbr{LDP} upper bound holds for $\{\nu_n\}$, thanks to \eqref{eq:dz-4218}
(see \cite[(4.2.18)]{DZ}), thereby completing the proof of
part (b) of Prop.~\ref{prop:exp-g}.
\end{proof}

\section{LDP for the uniform random graph}\label{sec:ldp-uniform}

\subsection{Proof of Theorem~\ref{thm:ldp}}\label{subsec:coupling}

Let $\mu_n$ be the law of $\cG(n,p)$, which obeys the \abbr{LDP} with good rate function $I_p(\cdot)$ on $(\widetilde\cW_0,\delta_\square)$ and speed $n^2$, and let $\nu_n$ denote the law of $\cG(n,m_n)$. We shall apply Proposition~\ref{prop:exp-g}(b)
for $\sss=\R$ and $s=p$, with $f$ denoting 
the $L^1$-norm on graphons (edge density):
\[ f(W) := \|W\|_1 = \iint W(x,y)\ dxdy\,. \]
With these choices, the role of $Z_n$ will be assumed by $G_n\sim \cG(n,m_n)$, whereas those of the random variables $Z_n^\eta$ will be assumed by the binomial random graph $\cG(n,p)$ conditioned on having between $\frac{1}{2} (p-\eta) n^2$ and 
$\frac{1}{2} (p+\eta) n^2$ edges:
\begin{equation}\label{eq:G_n_eta_def} G_n^\eta \sim \left( \cG(n,p) \;\big|\; B^o_{p,\eta}\right)\,,\quad\mbox{where}\quad B^o_{p,\eta} = \left\{ G : 
\tfrac{2|E(G)|}{n^2} \in (p-\eta,p+\eta)\right\}
\end{equation}
Note that $p_n:=2 m_n/n^2 \in (p-\eta,p+\eta)$ for all $n \ge n_0(\eta)$.
We couple $(G_n,G_n^\eta)$ so that for such $n$, detereministically,
\begin{equation}\label{eq:coupling-xor}
|E(G_n) \xor E(G_n^\eta)|< \eta n^2
\end{equation} 
(here $S\xor T$ denotes symmetric difference).
This is achieved by the following procedure:
 \begin{enumerate}[(i)]
 \item Draw $G_n \sim \cG(n,m_n)$.
 \item Independently of $G_n$ draw $E_n \sim \Bin(\binom{n}2,p)$ 
 and $M_n \sim \left( E_n \mid  |2 E_n/n^2 -p| < \eta \right)$.
 Let $D_n = M_n-m_n$ and obtain $G_n^\eta$ from $G_n$ as follows:
	\begin{itemize}[$\bullet$]
 \item{}[shortage] if $D_n\geq 0$: add a uniformly chosen subset of $D_n$ edges missing~from~$G_n$.
 		\item{} [surplus] if $D_n\leq 0$: delete a uniformly chosen subset of $D_n$ edges from $G_n$.
	\end{itemize}
 \end{enumerate}
Since $|D_n| < \eta n^2$ this guarantees~\eqref{eq:coupling-xor} and has $G_n\sim\nu_n$; the additional fact that $G_n^\eta\sim \nu_n^\eta$ is seen by noting that, if $G\sim \cG(n,p)$ then $|E(G)|\sim \Bin(\binom{n}2,p)$, and on the event that $G$ has $M$ edges, these are uniformly distributed (i.e., the conditional distribution is $\cG(n,M)$). 

We proceed to show that such $\{G_n^\eta\}$ form an exponentially good approximation of $G_n$. Indeed, from the 
identity $\prod_{i=1}^t a_i - \prod_{i=1}^t b_i = \sum_{j=1}^t (\prod_{i<j} a_i) (a_j-b_j)(\prod_{i>j} b_i)$ and the 
definition of $t_H(\cdot)$, we find that 
for any $H$ of $t$ edges and graphs $G,G'$ 
on $n$ vertices, 
\begin{equation}
	\label{eq:tH-bound}
	\left|t_H(G)-t_H(G')\right| \leq t \|W_G-W_{G'}\|_1 = \frac{2t}{n^2}   
	\left|E(G)\xor E(G')\right|
\end{equation} 
(see also~\cite[Lemma 10.22]{Lovasz12}).

Next, fixing $\delta>0$, we set $k(\delta) \in \N$ large enough so that
$\delta>22 /\sqrt{\log_2 k} $ (for example, $k = \big\lceil 2^{(25/\delta)^2}\big\rceil$), and recall the following result:
\begin{theorem*}[{\cite[Thm.~2.7(b)]{BCLSV08}}]
If $k\geq 1$ and the graphs $G,G'$ are such that 
for every simple graph $H$ on $k$ vertices
$|t_H(G)-t_H(G')|\leq 3^{-k^2}$, then 
$\delta_\square(W_G,W_{G'}) \leq 22/\sqrt{\log_2 k}$.
\end{theorem*}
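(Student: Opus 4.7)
The plan is to apply the classical \emph{sampling/counting} strategy of graph-limit theory: introduce a common random finite ``test graph'' derived from $G$ and $G'$, show that each of $W_G$ and $W_{G'}$ is close in cut-distance to its own test graph, and then show that the two test graphs can be coupled to coincide. For $k\in\N$ and $W\in\cW_0$, let $\mathbb{G}(k,W)$ denote the random simple graph on vertex set $[k]$ obtained by drawing $X_1,\dots,X_k$ i.i.d.\ uniform on $[0,1]$ and inserting each edge $\{i,j\}$ independently with probability $W(X_i,X_j)$; when $W=W_G$ for a finite graph $G$, this is essentially the induced subgraph on $k$ vertices sampled uniformly from $V(G)$.

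The analytic ingredient is a \emph{first sampling lemma}: there is a universal constant $c$ such that
\[
\E\bigl[\delta_\square\bigl(W,W_{\mathbb{G}(k,W)}\bigr)\bigr] \leq \frac{c}{\sqrt{\log_2 k}}\qquad\text{for every }W\in\cW_0.
\]
I would prove this by combining the Frieze--Kannan weak regularity lemma (which approximates $W$ in cut-norm within $\epsilon$ by a step graphon on $2^{O(1/\epsilon^2)}$ parts) with a McDiarmid/Azuma-type concentration bound showing that the random $k$-sample preserves this step structure with high probability. Applying Markov's inequality separately to $G$ and $G'$ then yields an event $A_1$ of probability $\geq 2/3$ on which both $\delta_\square(W_G,W_{\mathbb{G}(k,W_G)})$ and $\delta_\square(W_{G'},W_{\mathbb{G}(k,W_{G'})})$ are at most $3c/\sqrt{\log_2 k}$.

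The combinatorial ingredient translates the density hypothesis into \abbr{TV}-closeness of the laws of $\mathbb{G}(k,W_G)$ and $\mathbb{G}(k,W_{G'})$. For any simple graph $F$ on vertex set $[k]$, inclusion--exclusion writes the induced density $t_{\mathrm{ind}}(F,W) := \P(\mathbb{G}(k,W)=F)$ as a signed sum of the homomorphism densities $t_H(W)$ over $H$ with $E(H)\supseteq E(F)$ and $V(H)=[k]$, a sum of at most $2^{\binom{k}{2}}$ terms each with coefficient $\pm1$. The hypothesis $\max_H|t_H(G)-t_H(G')|\leq 3^{-k^2}$ therefore gives $|t_{\mathrm{ind}}(F,W_G)-t_{\mathrm{ind}}(F,W_{G'})|\leq 2^{\binom{k}{2}}\cdot 3^{-k^2}$; summing over the at most $2^{\binom{k}{2}}$ graphs $F$ on $[k]$ yields
\[
d_{\mathrm{TV}}\bigl(\mathbb{G}(k,W_G),\mathbb{G}(k,W_{G'})\bigr) \leq 4^{\binom{k}{2}}\cdot 3^{-k^2}\leq (2/3)^{k^2},
\]
which is $o(1)$ in $k$. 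The coupling characterization of total variation then provides a coupling under which $\mathbb{G}(k,W_G)=\mathbb{G}(k,W_{G'})$ on an event $A_2$ with $\P(A_2)\geq 1-(2/3)^{k^2}$.

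On $A_1\cap A_2$ (which has positive probability for every $k$ large enough; for the few small $k$ where the stated bound $22/\sqrt{\log_2 k}$ exceeds $1$ the conclusion is vacuous), the triangle inequality gives $\delta_\square(W_G,W_{G'})\leq 6c/\sqrt{\log_2 k}$, and tracking the constants through the Frieze--Kannan approximation and the concentration step yields the claimed $22/\sqrt{\log_2 k}$. The main obstacle is the first sampling lemma: obtaining the rate $1/\sqrt{\log_2 k}$ is delicate because the cut-norm is not coordinate-wise Lipschitz in any clean way, and the exponential dependence of the Frieze--Kannan partition size on $1/\epsilon^2$ is precisely what forces the $\sqrt{\log k}$ rate. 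The inclusion--exclusion and coupling steps are essentially formal in comparison.
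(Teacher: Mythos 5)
This statement is not proved in the paper at all: it is quoted as a black box from \cite[Thm.~2.7(b)]{BCLSV08}, so there is no internal argument to compare with. Your outline is, in essence, the original proof from that source: sample $\mathbb{G}(k,\cdot)$ from each graphon, use a sampling lemma to say each graphon is within $O(1/\sqrt{\log_2 k})$ of its sample in cut distance, convert the hypothesis on homomorphism densities into closeness of the laws of the two samples, couple, and finish by the triangle inequality. The combinatorial step is exactly right: for graphons, $\P(\mathbb{G}(k,W)=F)$ is a $\pm 1$-signed sum of at most $2^{\binom{k}{2}}$ densities $t_H(W)$ over graphs $H$ on the vertex set $[k]$ containing $F$, so the hypothesis gives total variation distance at most $2^{k(k-1)}3^{-k^2}\le (2/3)^{k^2}$, and the coupling step is formal.

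Two points keep this from being a complete proof of the statement as quoted. First, the sampling lemma with rate $1/\sqrt{\log_2 k}$ is the entire analytic content of the theorem; a one-line sketch (``weak regularity plus Azuma/McDiarmid'') does not discharge it --- that argument is the delicate part of \cite{BCLSV08}, and without it nothing has been proved. Second, the expectation-plus-Markov organization cannot recover the stated constant $22$: the known bounds behind this lemma have constant about $10$ (in the high-probability form $\delta_\square(W,W_{\mathbb{G}(k,W)})\le 10/\sqrt{\log_2 k}$ with failure probability decaying in $k$), so an unspecified $c$ inflated by a Markov factor lands you around $60/\sqrt{\log_2 k}$ rather than $22/\sqrt{\log_2 k}$; also, Markov at factor $3$ applied to two graphons gives $\P(A_1)\ge 1/3$, not $\ge 2/3$ (harmless, since any positive probability survives intersection with the coupling event, but symptomatic of the loose bookkeeping). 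To get the advertised constant you must use the high-probability sampling lemma directly for each of $G,G'$ and intersect with the coupling event, which is how the cited proof proceeds. For the purposes of this paper the exact constant is immaterial --- any universal $C/\sqrt{\log_2 k}$ would do after adjusting $k(\delta)$ in \S\ref{subsec:coupling} --- but as a proof of the theorem as stated, the constant does not come out of your argument.
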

To utilize this relation, set $ \eta_0(\delta) = k^{-2} 3^{-k^2}$ noting 
that if graphs $G,G'$ on $n$ vertices satisfy 
$|E(G)\xor E(G')| < \eta n^2$ for some $\eta\leq \eta_0$, then 
$\left|t_H(G)-t_H(G')\right| < 2 \binom{k}2 \eta_0 < 3^{-k^2}$ for every 
graph $H$ on $k$ vertices, and so by the preceding 
$\delta_\square(G,G') < \delta$. 
In particular, from \eqref{eq:coupling-xor} we deduce that 
for every $\eta\leq \eta_0$ and all $n \ge n_0(\eta)$, 
\[ \P\left(\delta_\square(G_n,G_n^\eta)>\delta\right) = 0\]
holds under the above coupling of $(G_n,G_n^\eta)$, thereby implying~\eqref{eq:exp-g-apx}.

Finally, Noting that $B^o_{p,\eta}$ of \eqref{eq:G_n_eta_def} is 
the event $|2E_n/n^2-p|<\eta$ (with $E_n \sim \Bin(\binom{n}2,p)$ under $\mu_n$),
we deduce from the \abbr{LLN} that $\mu_n(B^o_{p,\eta})\to 1$.
In particular, for any $\eta>0$ one has that 
$n^{-2}\log\mu_n( B^o_{p,\eta}) \to 0$, thereby verifying \eqref{eq-p-likely}
for the case at hand.
\qed

\subsection{Proof of Corollary~\ref{cor:variational}}\label{subsec:thm2-proof}

(a). Recalling that $J_p(W)=I_p(W)$ on $\widetilde \cW_0^{(p)}$ and 
otherwise $J_p(W)=\infty$, we express \eqref{eq:variational2} as
\[
\psi_H(p,r) = \inf_{W \in \Gamma_{\ge r}} \, \{ J_p(W) \} \,,
\]
for the closed set of graphons 
\begin{equation}\label{eq:tH-geq-r}
\Gamma_{\ge r} := \left\{ W \in \widetilde \cW_0 ~\,:\,~ t_H(W) \geq r \right\} \,,
\end{equation}
denoting by $\Gamma_{=r}$ the closed subset of graphons 
with $t_H(W)=r$.
The unique global minimizer of $J_p(\cdot)$ over $\widetilde \cW_0$
is $W_\star \equiv p$. With $W_\star \in \Gamma_{=t_H(p)}$, it follows
that $\psi_H(p,r)=0$ on $[0,t_H(p)]$. Next, for any
$r \in (t_H(p),r_H]$, the good rate function $J_p(\cdot)$ 
is finite on the nonempty set $\Gamma_{\ge r} \cap \widetilde \cW_0^{(p)}$,
hence $\psi_H(p,r)=\alpha$ is finite and positive, with the infimum 
in \eqref{eq:variational2} attained at the nonempty compact set
\begin{equation}\label{def:F-star}
F_\star = \Gamma_{\ge r} \cap \{ W \in \widetilde \cW_0 \,:\, 
J_p(W) \le \alpha \} \,.
\end{equation}
Fixing such $r$ and $W_r \in F_\star$, consider the 
map 
$W_r(\lambda) := \lambda W_r + (1-\lambda) W_\star$ from 
$[0,1]$ to $\widetilde \cW_0^{(p)}$. Thanks to the 
continuity of $\lambda \mapsto t_H(W_r(\lambda))$ on $[0,1]$, there
exists for any $r' \in [t_H(p),t_H(W_r))$ some $\lambda' =\lambda'(r') 
\in [0,1)$ such that $t_H(W_r(\lambda'))=r'$. Hence, due to  
the convexity of $J_p(\cdot)$,  
\[
\psi_H(p,r') \le J_p(W_r(\lambda')) \le \lambda' J_p(W_r)) = \lambda'\alpha < \alpha := \psi_H(p,r) \,.
\] 
We have shown that $\psi_H(p,r') < \psi_H(p,r)$ for all 
$r' \in [t_H(p),t_H(W_r))$. Recalling that $t_H(W_r) \ge r$,
it follows that $\psi_H(p,\cdot)$ is strictly increasing on 
$[t_H(p),r_H]$ and further, that necessarily $t_H(W_r)=r$ for 
any $W_r \in F_\star$. That is, the collection $F_\star$ of 
minimizers of \eqref{eq:variational2} then consists of only
the minimizers of \eqref{eq:variational-eq-r}. 

Next, if 
$\psi_H(p,r') \le \alpha < \infty$ for all $r'<r$ then 
there exist a pre-compact collection $\{W_{r'}, r'<r\}$ in
$(\delta_\square,\widetilde \cW_0)$, with $J_p(W_{r'}) \le \alpha$ 
and $t_H(W_{r'}) \ge r'$. By the continuity of $t_H(\cdot)$ and 
the \abbr{lsc} of $J_p(\cdot)$, it follows that 
$t_H(W_r) \ge r$ and $J_p(W_r) \le \alpha$ for any
limit point $W_r$ of $W_{r'}$ as $r' \uparrow r$. Consequently
$\psi_H(p,r) \le \alpha$ as well, establishing the 
stated left-continuity of $\psi_H(p,\cdot)$ on $[0,r_H]$. Finally, 
recall that an increasing function, finite on $[0,r_H]$ 
and infinite otherwise, is \abbr{lsc} iff it is left continuous
on $[0,r_H]$.

\noindent
(b). 
Considering the \abbr{LDP} bounds
of Theorem~\ref{thm:ldp} for the closed set $\Gamma_{\ge r}$
and its open subset $\Gamma_{>r} := \Gamma_{\ge r} \setminus \Gamma_{=r}$
we deduce that 
\begin{align*} 
- \lim_{r' \downarrow r} \{ \psi_H(p,r') \} =
- \inf_{W \in \Gamma_{>r}} \, \{ J_p(W) \} 
& \le 
\liminf_{n \to \infty} n^{-2} \log  \P\left(t_H(G_n) > r\right) \\
& \le 
\limsup_{n \to \infty} n^{-2} \log \P\left(t_H(G_n)
  \geq r\right) \le  - \psi_H(p,r) \,.
\end{align*}
By the assumed right-continuity of $t \mapsto \psi_H(p,t)$ 
at $r \in [0,r_H)$, the preceding inequalities must all hold 
with equality, resulting with \eqref{eq:asymptotic_rate_func}. 

\noindent
(c). Proceeding to prove \eqref{eq:optimizers}, we fix $(p,r)$ as in part (b).
Further fixing $\epsilon > 0$, let 
\[
B_{W',\epsilon} := \left\{ W \in \widetilde \cW_0 \,:\, 
\delta_\square(W,W') < \epsilon \right\}
\] 
denote open cut-metric balls and consider the closed subset of $\Gamma_{\ge r}$,
\begin{equation}\label{def:Gamma-r-eps}
\Gamma_{\ge r,\epsilon} := \Gamma_{\ge r} \bigcap_{W' \in F_\star} 
(B_{W',\epsilon})^c \,.
\end{equation}
In view of \eqref{eq:asymptotic_rate_func} and the fact that
\[
\{ \delta_\square (G_n, F_\star) \ge \epsilon, \; t_H(G_n) \geq r\}
=  \{ W_{G_n} \in \Gamma_{\ge r,\epsilon} \} \,,
\]  
it suffices for \eqref{eq:optimizers} to show that 
\[
\limsup_{n \to \infty} n^{-2} \log 
\P \left( W_{G_n} \in \Gamma_{\ge r,\epsilon} \right) < - \alpha \,.
\]
By the \abbr{ldp} upper-bound of Theorem~\ref{thm:ldp}, this in 
turn follows upon showing that  
\begin{equation}\label{eq:contr}
\inf_{W \in \Gamma_{\ge r, \epsilon}} \, \{J_p(W)\} \le \alpha  
\end{equation}
contradicts the definition of $F_\star$. Indeed, $J_p(\cdot)$ 
has compact level sets, so if~\eqref{eq:contr} holds
then $J_p(W_r) \le \alpha$ for some
$W_r \in \Gamma_{\ge r,\epsilon}$. Recall \eqref{def:F-star} 
that in particular $W_r \in F_\star$, hence \eqref{def:Gamma-r-eps} 
implies that $\delta_\square(W_r,W_r) \ge \epsilon>0$,  
yielding the desired contradiction.
\qed

\subsection{Sparse uniform random graphs}\label{subsec:sparse}
 In this section we show that, as was the case in $\cG(n,p)$, the analog of~\eqref{eq:asymptotic_rate_func}, giving the asymptotic rate function for $\cG(n,m)$ in the sparse regime $m_n= n^2/\log^c n$ for a suitably small $c>0$, can be derived in a straightforward manner from the weak regularity lemma. Indeed, 
the proof below follows essentially the same short argument used for 
$\cG(n,p)$ in~\cite[Prop.~5.1]{LZ-sparse}.

\begin{definition}[Discrete variational problem for upper tails]
Let $H$ be a graph with $\kappa$ edges, and let $b>1$. Denote the set of weighted undirected graphs on $n$ vertices by
\[ \widehat{\mathscr{G}}_{n} = \left\{ (a_{ij})_{1\leq i \leq j \leq n} : 0\leq a_{ij} \leq 1\,,\, a_{ij}=a_{ji}\,,\, a_{ii}=0 \mbox{ for all $i,j$}\right\}\,,\]
and extend the definition of the graphon $W_{\widehat G}$ in~\eqref{eq:W_G-def} to a weighted graph $\widehat G\in\widehat{\mathscr{G}}_n$ by replacing the weight 1 corresponding to an edge $(\lceil nx\rceil,\lceil ny \rceil)$ by the weight $a_{\lceil nx \rceil,\lceil ny\rceil}$.
Taking $ m_n \le \binom{n}2$ and $p_n=m_n/\binom{n}2$,
the variational problem for $G_n\sim \cG(n,m_n)$ is
\[\widehat{\psi}_H(n,m_n,b) := \inf\Big\{ I_{p_n}(W_{\widehat G}) \,:\; \widehat G\in\widehat{\mathscr{G}}_n\,,\,t_H(W_{\widehat G})\geq b\, p_n^{\kappa}\,,\,\sum_{ij}a_{ij}=p_n \Big\}\,.
\]
\end{definition}
\begin{remark*}
When $p_n\to p$ for some fixed $0<p<1$, and $r\in [p^\kappa,r_H]$ is a right-continuity point of $t\mapsto\psi_H(p,t)$ (whence~\eqref{eq:asymptotic_rate_func} holds), one has $ \psi_H(p, r) = \lim_{n\to\infty}\widehat\psi_H(n,m_n, r p^{-\kappa}) $ (e.g., rescale a sequence $\widehat G_n$ of minimizers for $\widehat\psi_H(n,m_n,rp^{-\kappa}+\epsilon)$ by $p/p_n$; conversely, for a minimizer $W$ for $\psi_H(p,r)$, one can take  a sequence $G_n$ with $W_{G_n}\to W$).
\end{remark*}

 \begin{proposition}\label{prop:sparse-var}
Fix $H$ be a graph with $\kappa$  edges, fix $b>1$ and for $m_n \in \N$
let $G_n\sim \cG(n,m)$ and $p_n  = m_n/\binom{n}2$. For every $\epsilon>0$ there exists some $K<\infty$ such that, if $p_n (\log n)^{1/(2\kappa)} \ge K$ and $n$ is sufficiently large then
 \begin{align*}
-\widehat\psi_H(n,m_n,b) - \epsilon \leq \frac1{n^2}\log \P(t_H(G_n)\geq b\, p_n^\kappa) \leq -\widehat\psi_H(n,m_n,b-\epsilon) + \epsilon\,.
 \end{align*}
 In particular, if $m_n\in \N$ is such that $p_n (\log n)^{1/(2\kappa)}\to\infty$ and  $\lim_{n\to\infty}\widehat\psi_H(n,m_n,t)$ exists and is continuous in some neighborhood of $t=b$, then
 \begin{align*}
\lim_{n\to\infty}\frac1{n^2}\log \P\left(t_H(G_n)\geq b\, p_n^\kappa\right) = -\lim_{n\to\infty} \widehat\psi_H(n,m_n,b)\,.
 \end{align*}
 \end{proposition}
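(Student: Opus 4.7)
The plan is to follow the short weak-regularity argument of \cite[Prop.~5.1]{LZ-sparse}, originally developed for $\cG(n,p_n)$, adapted to $\cG(n, m_n)$ by directly enumerating graphs with exactly $m_n$ edges rather than passing through $\cG(n,p_n)$. Both bounds will hinge on pairing the Frieze--Kannan weak regularity lemma at cut-norm scale $\eta := \epsilon p_n^\kappa/\kappa$ with a Stirling evaluation of block-wise binomial coefficients.

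For the upper bound, on the event $t_H(G_n) \ge b p_n^\kappa$ I will apply the Frieze--Kannan weak regularity lemma to $W_{G_n}$, obtaining an equipartition $\mathcal P = \{I_1,\ldots,I_k\}$ of $[n]$ into $k = 2^{O(\eta^{-2})} = 2^{O(p_n^{-2\kappa})}$ parts and the block-average graphon $\widehat W_{G_n,\mathcal P}$ with $\|W_{G_n} - \widehat W_{G_n,\mathcal P}\|_\square \le \eta$; the counting lemma then forces $t_H(\widehat W_{G_n,\mathcal P}) \ge (b-\epsilon)p_n^\kappa$, while $\|\widehat W_{G_n,\mathcal P}\|_1 = p_n$ holds automatically. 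For fixed $\mathcal P$ and any admissible step function $U = (a_{ab})$ with integer edge counts $e_{ab} = a_{ab} N_{ab}$ summing to $m_n$ (where $N_{ab} = |I_a||I_b|$ if $a<b$ and $\binom{|I_a|}{2}$ if $a=b$),
\[
\P_{m_n}\bigl(\widehat W_{G_n,\mathcal P} = U\bigr) = \binom{\binom{n}{2}}{m_n}^{-1}\prod_{a \le b}\binom{N_{ab}}{e_{ab}}\,.
\]
By Stirling and the identity $I_{p_n}(U) = h_e(p_n) - h_e(U)$ valid on $\widetilde \cW_0^{(p_n)}$, this equals $\exp(-n^2 I_{p_n}(U) + o(n^2) + O(k^2 \log n))$. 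A union bound over at most $k^n$ partitions and $n^{2k^2}$ admissible $U$, combined with the threshold $p_n (\log n)^{1/(2\kappa)} \ge K$ (for $K = K(\epsilon,\kappa)$ large enough to force $n \log k + k^2 \log n = o(\epsilon n^2)$), then extracts $\widehat\psi_H(n, m_n, b-\epsilon)$ as the infimum of $I_{p_n}(U)$ over admissible $U$, yielding the upper bound.

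For the lower bound, pick a near-minimizer $\widehat G^*$ of $\widehat\psi_H(n, m_n, b)$ with $I_{p_n}(W_{\widehat G^*}) \le \widehat\psi_H(n,m_n,b) + \epsilon/4$. To create a margin for the block-model fluctuations appearing below, first perturb by convex combination with a fixed graphon of strictly larger $t_H$, producing $\widetilde G$ with $t_H(\widetilde G) \ge (b + \delta_n)p_n^\kappa$ for some $\delta_n \downarrow 0$ and $I_{p_n}(\widetilde G) \le \widehat\psi_H(n,m_n,b) + \epsilon/2$ (using convexity of $I_{p_n}$ and continuity of $t_H$). Next, round $\widetilde G$ to a step function $U^*$ on an equipartition with integer $e_{ab} = a_{ab}^* N_{ab}$ and $\sum e_{ab} = m_n$ exactly; rounding errors are negligible under the threshold condition. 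The collection of graphs placing exactly $e_{ab}$ edges within each block pair has cardinality $\prod_{a \le b}\binom{N_{ab}}{e_{ab}} = \exp(n^2 h_e(U^*) - o(n^2))$ by Stirling, and each such graph has $|E(G)|=m_n$. A Hoeffding- or Kim--Vu-type concentration applied to the uniform distribution on this collection will show that $t_H$ concentrates around $t_H(U^*)$ with fluctuations $o(\delta_n p_n^\kappa)$, so at least half of these graphs satisfy $t_H \ge b p_n^\kappa$. Dividing by $\binom{\binom{n}{2}}{m_n} = \exp(n^2 h_e(p_n) + o(n^2))$ then gives the lower bound.

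The main obstacle will be to control $t_H$ within the sparse block-uniform model in the lower bound: since the typical block density is on the $p_n$ scale, the fluctuations of $t_H$ at the $p_n^\kappa$ scale are near-tight, and the threshold $p_n (\log n)^{1/(2\kappa)} \to \infty$ is precisely tuned to render them negligible. This same threshold simultaneously suppresses the weak-regularity combinatorial factor $k^2 \log n = \exp(O(\log n)/p_n^{2\kappa})\log n$ to $o(n^2)$ in the upper-bound union bound. The perturbation step is necessary because the infimum in $\widehat\psi_H(n, m_n, b)$ may leave no margin between $t_H(\widehat G^*)$ and $b p_n^\kappa$. Finally, the ``In particular'' statement follows by sending $\epsilon \downarrow 0$ in the two-sided non-asymptotic bound and invoking the assumed continuity of $\lim_n \widehat\psi_H(n, m_n, \cdot)$ at $t = b$.
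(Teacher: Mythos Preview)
Your overall strategy matches the paper's: both arguments run the Frieze--Kannan weak regularity lemma at cut-norm scale $\eta\asymp \epsilon p_n^\kappa/\kappa$, invoke the counting lemma to pass to a block graphon, and use the hypothesis $p_n(\log n)^{1/(2\kappa)}\ge K$ precisely so that the union-bound cost $k^n\cdot n^{O(k^2)}$ with $k=2^{O(p_n^{-2\kappa})}$ stays below $e^{\epsilon n^2}$. The one structural difference is that the paper routes the block-probability estimate through $\cG(n,p_n)$ via Lemma~\ref{lem:log-prob-E1-E2} (the local bound $\P(\Bin(N,p_n)=m_n)\gtrsim N^{-1/2}$ transfers the Cram\'er-type estimate from $\cG(n,p_n)$ to $\cG(n,m_n)$ at negligible cost), whereas you compute $\P_{m_n}(\widehat W_{G_n,\mathcal P}=U)=\binom{N}{m_n}^{-1}\prod_{a\le b}\binom{N_{ab}}{e_{ab}}$ directly and apply Stirling. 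These are equivalent, and your upper bound goes through.

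Your lower bound contains one real slip. The perturbation ``convex combination with a fixed graphon $W_0$ of strictly larger $t_H$'' need not yield $t_H(\widetilde G)>t_H(\widehat G^*)$: the map $\lambda\mapsto t_H\bigl((1-\lambda)\widehat G^*+\lambda W_0\bigr)$ is a degree-$\kappa$ polynomial whose derivative at $\lambda=0$ is a sum of \emph{mixed} densities $\sum_{e\in E(H)} t_H^{(e)}(\widehat G^*;W_0-\widehat G^*)$, not $t_H(W_0)-t_H(\widehat G^*)$, and this can be negative (e.g.\ $H=K_3$, $\widehat G^*$ essentially bipartite, $W_0$ supported on a disjoint part). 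Continuity of $t_H$ only gives $t_H(\widetilde G)\to t_H(\widehat G^*)$, not approach from above, so the claimed margin $\delta_n>0$ is unjustified, and the subsequent demand that fluctuations be $o(\delta_n p_n^\kappa)$ becomes circular. The clean fix is to drop the perturbation and simply take $\widehat G^*$ to be a near-minimizer for $\widehat\psi_H(n,m_n,b+\delta)$ with a small fixed $\delta>0$, which carries the margin $t_H(\widehat G^*)\ge(b+\delta)p_n^\kappa$ by definition; your block-count and concentration steps then yield $\frac1{n^2}\log\P(t_H(G_n)\ge bp_n^\kappa)\ge -\widehat\psi_H(n,m_n,b+\delta)-\epsilon/2$, which already suffices for the ``in particular'' conclusion and, under right-continuity of $t\mapsto\widehat\psi_H(n,m_n,t)$ at $b$, recovers the stated first display as well. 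The paper's own proof is in fact terser than yours on this point---it writes out only the upper bound and defers the lower bound wholesale to~\cite{LZ-sparse}---so you are already more explicit; just replace the convex-combination step.
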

 The following simple lemma, whose analog for upper tails in $\cG(n,p)$ (addressing only the event $\mathcal{E}_1$ below) was phrased in~\cite[Lemma~5.2]{LZ-sparse} for triangle counts in $\cG(n,p)$, is an immediate consequence of the independence of distinct edges and Cram\'er's Theorem.
\begin{lemma}\label{lem:log-prob-E1-E2}
Fix $\epsilon>0$ and suppose $n$ is sufficiently large. Let  $V_1,\ldots,V_s$ be a partition of $\{1,\ldots,n\}$, let  $\widehat{G}=(a_{ij})\in\widehat{\mathscr{G}}_s$ be such that $\sum a_{ij}=p=m/\binom{n}2$, and define
 \[ \mathcal{E}_1(G) = \bigcap_{\substack{i,j \\ a_{ij}>p}}\left\{d_{G}(V_i,V_j) \geq a_{ij}\right\} \,,\qquad\mathcal{E}_2(G)=\bigcap_{\substack{i,j \\ a_{ij}<p}}\left\{d_{G}(V_i,V_j) \leq a_{ij} \right\}\,,
 \]
where $d_G(X,Y)=\frac{\#\big\{(x,y)\in X\times Y: xy \in E(G)\big\}}{|X||Y|}$. Then  $G_n\sim\cG(n,m)$ has
 \begin{equation}
-I_p(W_{\widehat G}) - \epsilon \leq \frac1{n^{2}}\log\P\left(\mathcal{E}_1(G)\cap\mathcal{E}_2(G)\right)  \leq -I_p(W_{\widehat G}) + \epsilon\,.\end{equation}
 \end{lemma}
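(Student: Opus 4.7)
The approach mirrors the argument for the $\cG(n,p)$ analogue in \cite[Lemma~5.2]{LZ-sparse}: view $\cG(n,m)$ as $\cG(n,p)$ conditioned on $\{|E(G)|=m\}$, where $p=m/\binom{n}{2}$, then exploit edge independence under $\cG(n,p)$ to reduce to a product-of-binomials estimate handled by Cram\'er's theorem. Under $\cG(n,p)$, the block-pair edge counts $N_{ij}:=e_G(V_i,V_j)$ for $i\le j$ are mutually independent binomials with parameters $(c_{ij},p)$, where $c_{ij}=|V_i||V_j|$ for $i<j$ and $c_{ii}=\binom{|V_i|}{2}$. The event $\mathcal{E}_1\cap\mathcal{E}_2$ factors over $(i,j)$ into one-sided tail events on each $N_{ij}$, and Cram\'er's theorem applied marginally gives
\[
\log\P_{\cG(n,p)}(\mathcal{E}_1\cap\mathcal{E}_2)\;=\;-\sum_{(i,j):\,a_{ij}\neq p}c_{ij}\cdot 2I_p(a_{ij})+o(n^2)\;=\;-n^2 I_p(W_{\widehat G})+o(n^2),
\]
the factor $2$ absorbing the $\tfrac12$ in the definition \eqref{eq:Ip} of $I_p$ and diagonal corrections being of order $O(n)$.

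From this the upper bound of the lemma follows from $\P_{\cG(n,m)}(A)\le \P_{\cG(n,p)}(A)/\P_{\cG(n,p)}(|E|=m)$ together with the Stirling estimate $\log\P_{\cG(n,p)}(|E|=m)=-O(\log n)=o(n^2)$ (valid since $m\approx p\binom{n}{2}$). For the lower bound, restrict $\mathcal{E}_1\cap\mathcal{E}_2$ to the sharper two-sided box event $\mathcal{E}':=\bigcap_{(i,j):\,a_{ij}\neq p}\{|N_{ij}-a_{ij}c_{ij}|\le \tau c_{ij}\}$ for a small $\tau=\tau(\epsilon)$; a matching Cram\'er lower bound gives $\log\P_{\cG(n,p)}(\mathcal{E}')\ge -n^2 I_p(W_{\widehat G})-\tfrac12\epsilon n^2$. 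The hypothesis $\sum a_{ij}=p$ (interpreted so that $\sum c_{ij}a_{ij}=m$) ensures that on $\mathcal{E}'$ the total edge count $|E(G)|=\sum N_{ij}$ is concentrated within $O(\tau n^2)$ of $m$, and a local limit theorem for the independent sum $\sum N_{ij}$ then gives $\P(|E|=m\mid\mathcal{E}')\ge 1/\mathrm{poly}(n)=e^{-o(n^2)}$, completing the lower bound.

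The one delicate ingredient is this last local limit estimate, needed to guarantee that the point conditioning on $|E|=m$ costs only a subexponential factor; the remainder of the argument is straightforward bookkeeping. A cleaner alternative that avoids conditioning altogether is to work directly under $\cG(n,m)$, where $(N_{ij})$ has a multivariate hypergeometric law with probabilities $\prod_{i\le j}\binom{c_{ij}}{n_{ij}}/\binom{\binom{n}{2}}{m}$; applying Stirling in the form $\log\binom{N}{K}=N h_e(K/N)+O(\log N)$, together with the identity $h_e(x)=-2I_p(x)-x\log p-(1-x)\log(1-p)$ and the constraint $\sum c_{ij}a_{ij}=m$, causes the extensive $h_e(p)$-contributions to cancel and collapses the log-probability of a box around $(a_{ij}c_{ij})$ directly to $-n^2 I_p(W_{\widehat G})+o(n^2)$, yielding both bounds after covering $\mathcal{E}_1\cap\mathcal{E}_2$ by polynomially many such boxes.
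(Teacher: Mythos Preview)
Your proposal is correct and follows essentially the same approach as the paper: pass through $\cG(n,p)$, use independence of the block edge-counts together with Cram\'er's theorem to get $\log\P_{\cG(n,p)}(\mathcal{E}_1\cap\mathcal{E}_2)=-n^2 I_p(W_{\widehat G})+o(n^2)$, and then transfer to $\cG(n,m)$ via the point conditioning $\{|E|=m\}$, which costs only a polynomial factor since $\P(\Bin(\binom{n}{2},p)=m)\geq 1/\sqrt{2\pi p(1-p)\binom{n}{2}}$. You are in fact more explicit than the paper about the lower-bound direction of this transfer---the paper simply asserts the two-sided bound $|\log\P_{\cG(n,m)}(A)-\log\P_{\cG(n,p)}(A)|\leq -\log\P(|E|=m)$ without further comment---and either your local-limit argument on the box event $\mathcal{E}'$ or the direct multivariate-hypergeometric/Stirling computation supplies this step cleanly.
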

\begin{proof}Let $G'_n\sim \cG(n,p)$, and recall that $d_{G'_n}(V_i,V_j)|V_i||V_j|\sim \Bin(|V_i||V_j|,p)$ and $d_{G'_n}(V_i,V_i)\binom{|V_i|}2\sim\Bin(\binom{|V_i|}2,p)$, with these variables being mutually independent, thus
\begin{align*} \frac1{n^2}\log\P\left(\mathcal{E}_1(G'_n)\cap\mathcal{E}_2(G'_n)\right) &\leq -\frac1{n^2}\sum_{i<j}|V_i||V_j|I_p(a_{ij}) -\frac1{n^2} \sum_{i} \tbinom{|V_i|}2 I_p(a_{ii}) \\
&= -I_p(W_{\widehat G})+O(n^{-2}) \,.\end{align*}
Next, since $\P(G_n \in \cdot) = \P(G'_n \in \cdot\mid |E(G'_n)|=m)$, it follows that
\[ \left| \log\P\left(\mathcal{E}_1(G_n)\cap\mathcal{E}_2(G_n)\right) - \log\P\left(\mathcal{E}_1(G'_n)\cap\mathcal{E}_2(G'_n)\right)\right| \leq  -\log\P(\Bin(\tbinom{n}2,p)=m)\,.\] For $N=\binom{n}2$, by  definition $p=m/N$ and so $\P(\Bin(N,p)=m) \geq 1/\sqrt{2\pi p(1-p)N}$ provided that $N$ is large enough, and the result follows.
\end{proof}
Combining the weak regularity lemma (see, e.g.,~\cite[Lemma 9.3]{Lovasz12}) with the counting lemma for graphons (cf., e.g.,~\cite[Lemma 10.23]{Lovasz12}) implies the following.
\begin{lemma}\label{lem:counting-reg}
Let $\epsilon>0$ and set $M=4^{1/\epsilon^2}$. For every graph $G$ there is a partition $V_1,\ldots,V_s$ of its vertices, for some $s\leq M$, such that the weighted graph 
 $\widehat{G} \in \widehat{\mathscr{G}}_s$ in which $a_{ij} = d_G(V_i,V_j)$ satisfies that, for every graph $H$ with $\kappa$ edges, 
$\big|t_H(G)-t_H(\widehat{G})\big| \leq \kappa\epsilon$.
\end{lemma}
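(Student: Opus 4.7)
The plan is to mirror the short argument of~\cite[Prop.~5.1]{LZ-sparse} for the binomial model $\cG(n,p)$, replacing its use of Cram\'er's theorem for independent edges by Lemma~\ref{lem:log-prob-E1-E2}, which ports the block-density estimate to $\cG(n,m_n)$, and invoking Lemma~\ref{lem:counting-reg} to reduce subgraph-count questions to a bounded partition. The two matching inequalities are established separately, with parameters tuned so that the counting-lemma tolerance $\epsilon'$ is small compared to $p_n^\kappa$ yet the resulting partition size $M=4^{1/(\epsilon')^2}$ stays sub-exponential in $n$; this is precisely the balance enforced by the hypothesis $p_n(\log n)^{1/(2\kappa)}\ge K$.

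For the upper bound, I would fix $\epsilon>0$ and set $\epsilon':=\epsilon p_n^\kappa/\kappa$. Applying Lemma~\ref{lem:counting-reg} to $G_n$ produces a partition $V_1,\dots,V_s$ of $\{1,\dots,n\}$ with $s\le M:=4^{1/(\epsilon')^2}$ and a weighted graph $\widehat G\in\widehat{\mathscr{G}}_s$ whose weights $a_{ij}=d_{G_n}(V_i,V_j)$ satisfy, on the event $\{t_H(G_n)\ge bp_n^\kappa\}$, the inequality $t_H(W_{\widehat G})\ge(b-\epsilon)p_n^\kappa$ together with $\sum a_{ij}=p_n$. Rounding each $a_{ij}$ to a $1/n^2$-grid and union-bounding over the partition (at most $n^M$ choices) and over the rounded profiles (at most $(n^2+1)^{M^2}$), the total enumeration factor is $\exp(O(M^2\log n))$, which under the hypothesis is $\exp(o(\epsilon n^2))$ once $K$ is large. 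For each rounded profile, Lemma~\ref{lem:log-prob-E1-E2} bounds the probability of the corresponding event $\mathcal{E}_1(G_n)\cap\mathcal{E}_2(G_n)$ by $\exp(-n^2 I_{p_n}(W_{\widehat G})+\epsilon n^2/2)$, while by the definition of $\widehat\psi_H$ each such $I_{p_n}(W_{\widehat G})$ is at least $\widehat\psi_H(n,m_n,b-\epsilon)$, completing the upper bound.

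For the lower bound, let $\widehat G^\star\in\widehat{\mathscr{G}}_n$ be a near-optimizer of $\widehat\psi_H(n,m_n,b)$ with $I_{p_n}(W_{\widehat G^\star})\le\widehat\psi_H(n,m_n,b)+\epsilon/4$. A small perturbation produces $\widehat G^{\star\prime}$ with $t_H(W_{\widehat G^{\star\prime}})\ge(b+\epsilon/2)p_n^\kappa$ at extra cost at most $\epsilon/4$ in $I_{p_n}$: moving a mass $\delta$ of edges from a low-density region to a high-density region raises $t_H$ linearly in $\delta$ at rate $\kappa p_n^{\kappa-1}$ while the convex function $I_{p_n}$ is raised only quadratically in $\delta/p_n$, so a choice $\delta\sim\epsilon p_n/\kappa$ suffices in the sparse regime. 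I would then coarsen $\widehat G^{\star\prime}$ via Lemma~\ref{lem:counting-reg} at parameter $\epsilon''\ll\epsilon p_n^\kappa/\kappa$ into $\widetilde G\in\widehat{\mathscr{G}}_s$; Jensen's inequality on the block-averaging keeps $I_{p_n}(W_{\widetilde G})\le I_{p_n}(W_{\widehat G^{\star\prime}})$, while the counting lemma preserves $t_H(W_{\widetilde G})\ge(b+\epsilon/4)p_n^\kappa$. Blowing up $\widetilde G$ to a nearly uniform partition $V_1,\dots,V_s$ of $\{1,\dots,n\}$, consider the event that $d_{G_n}(V_i,V_j)$ lies within $\tau:=\epsilon p_n^\kappa/(8\kappa)$ of $a_{ij}$ for every $i,j$. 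A direct two-sided variant of Lemma~\ref{lem:log-prob-E1-E2} yields probability at least $\exp(-n^2 I_{p_n}(W_{\widetilde G})-\epsilon n^2/2)\ge\exp(-n^2\widehat\psi_H(n,m_n,b)-\epsilon n^2)$, while on this event $\|W_{G_n}-W_{\widetilde G}\|_1\le\tau$ forces, via $\delta_\square\le\|\cdot\|_1$ and the Lipschitz continuity of $t_H$, the bound $t_H(G_n)\ge t_H(W_{\widetilde G})-\kappa\tau\ge bp_n^\kappa$.

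The main obstacle is the sparse-regime mismatch between the counting-lemma additive error $\kappa\epsilon'$ and the scale $p_n^\kappa$ at which $t_H$ lives; this forces $\epsilon'$ to be of order $\epsilon p_n^\kappa$, inflating the partition size to $M\sim 4^{1/(\epsilon p_n^\kappa)^2}$, and the hypothesis $p_n(\log n)^{1/(2\kappa)}\ge K$ with $K$ sufficiently large is precisely what renders the resulting union-bound factor $\exp(O(M^2\log n))$ negligible against $\exp(\epsilon n^2)$. The only delicate conceptual point is the perturbation step in the lower bound, which allows one to avoid assuming a~priori continuity of $\widehat\psi_H(n,m_n,\cdot)$ and is what makes the stated lower bound, matching $\widehat\psi_H(n,m_n,b)$ itself rather than $\widehat\psi_H(n,m_n,b+\epsilon)$, achievable.
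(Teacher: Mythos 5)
Your proposal does not prove the statement in question. The statement to be established is Lemma~\ref{lem:counting-reg} itself: a purely deterministic assertion that every graph $G$ admits a vertex partition into $s\le M=4^{1/\epsilon^2}$ parts whose induced weighted graph $\widehat G$ (with $a_{ij}=d_G(V_i,V_j)$) satisfies $|t_H(G)-t_H(\widehat G)|\le \kappa\epsilon$ for every $H$ with $\kappa$ edges. What you have written is instead a proof sketch of Proposition~\ref{prop:sparse-var} (the sparse tail asymptotics for $\cG(n,m_n)$), and in it you invoke Lemma~\ref{lem:counting-reg} as a black box (``invoking Lemma~\ref{lem:counting-reg} to reduce subgraph-count questions to a bounded partition''). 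Using the lemma to prove a downstream proposition cannot serve as a proof of the lemma; nothing in your argument produces the partition, bounds $s$ by $4^{1/\epsilon^2}$, or controls $|t_H(G)-t_H(\widehat G)|$, so the actual content of the statement is nowhere addressed.

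The intended argument is short and has nothing probabilistic in it: apply the weak (Frieze--Kannan) regularity lemma to the graphon $W_G$ to obtain a partition $V_1,\ldots,V_s$ of the vertex set with $s\le 4^{1/\epsilon^2}$ such that the step graphon $W_{\widehat G}$, obtained by averaging $W_G$ over the blocks $V_i\times V_j$ (i.e., with values $a_{ij}=d_G(V_i,V_j)$), satisfies $\|W_G-W_{\widehat G}\|_\square\le\epsilon$; then the counting lemma for graphons, $|t_H(W)-t_H(U)|\le |E(H)|\,\|W-U\|_\square$, immediately gives $|t_H(G)-t_H(\widehat G)|\le\kappa\epsilon$ for every $H$ with $\kappa$ edges. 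If you wish to salvage your write-up, it belongs to the proof of Proposition~\ref{prop:sparse-var}, not here; as a proof of Lemma~\ref{lem:counting-reg} it is circular and incomplete.
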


\begin{proof}[\textbf{\emph{Proof of Proposition~\ref{prop:sparse-var}}}]
By Lemma~\ref{lem:counting-reg}, if $G_n$ has $t_H(G_n)\geq b p_n^\kappa$ and $|E(G_n)|=m$ then there exists a partition  $V_1,\ldots,V_s$ of its vertices, for some $s\leq M$, such that the corresponding weighted graph $\widehat{G}$ satisfies $t_H(W_{\widehat{G}})\geq b p_n^\kappa -\kappa\epsilon$ and $\|W_{\widehat{G}}\| = p_n$ (note that the edge density is invariant under the partition). We may round each of the densities $a_{ij}$ of $\widehat G$ up to a multiple of $\epsilon$ (only increasing $t_H$), 
with the effect of potentially increasing the edge density to at most $p_n+\epsilon$. By rescaling we then arrive at $\widehat G'$ such that $\|W_{G'}\|_1=p_n$ and 
\[ t_H(W_{\widehat G'})\geq \frac{b p_n^\kappa -\kappa\epsilon}{(1+\epsilon)^\kappa} \geq b p_n^\kappa-\epsilon\]
provided that $\epsilon/ p_n^\kappa$ is small enough, which will indeed be the case by our assumption on $p_n$. Applying Lemma~\ref{lem:log-prob-E1-E2}, along with a union bound on the partition (at most $M^n$ possibilities) and the rounded $a_{ij}$'s (at most $(1/\epsilon)^{M^2}$ possibilities, the dominant factor), gives the required result, as the hypothesis that $p_n (\log n)^{1/2\kappa}$ is large enough guarantees that this union bound amounts to a multiplicative factor of at most $\exp(\epsilon' n^2)$.
\end{proof}

\subsection*{Acknowledgment} A.D.~was supported in part by NSF grant DMS-1613091 and E.L.~was supported in part by NSF grant DMS-1513403. 

\bibliographystyle{abbrv}
\bibliography{ldp_ref}

\end{document}